\numberwithin{equation}{section}
\numberwithin{figure}{section}
\theoremstyle{plain}
\newtheorem{thm}{\protect\theoremname}
  \theoremstyle{definition}
  \newtheorem{defn}[thm]{\protect\definitionname}
  \theoremstyle{plain}
  \newtheorem{lem}[thm]{\protect\lemmaname}
  \theoremstyle{plain}
  \newtheorem{cor}[thm]{\protect\corollaryname}
\date{}
\newcommand{\Hom}{\operatorname{Hom}}
\newcommand{\val}{\operatorname{val}}
\newcommand{\SL}{\operatorname{SL}}
\newcommand{\Gal}{\operatorname{Gal}}
\newcommand{\Fr}{\operatorname{Fr}}
\newcommand{\nr}{\operatorname{nr}}
\newcommand{\kernel}{\operatorname{ker}}
\newcommand{\tors}{\operatorname{torsion}}
\newcommand{\Hh}{\operatorname{H}}
\newcommand{\image}{\operatorname{image}}
\newcommand{\ad}{\operatorname{ad}}
  \providecommand{\corollaryname}{Corollary}
  \providecommand{\definitionname}{Definition}
  \providecommand{\lemmaname}{Lemma}
\providecommand{\theoremname}{Theorem}
\begin{document}

\title{A Galois side analogue of a theorem of Bernstein}

\author{Manish Mishra}

\address{Mathematics Center, Ruprecht-Karls-Universität Heidelberg\\
D-69120, Heidelberg, Germany}

\email{manish.mishra@gmail.com}
\begin{abstract}
Let $G$ be a connected reductive group defined over a non archimedean
local field $k$. A theorem of Bernstein states that for any compact
subgroup $K$ of $G(k)$, there are, upto unramified twists, only
finitely many $K$-spherical supercuspidal representations of $G(k)$.
We prove an analogous result on the Galois side of the Langlands correspondence. 
\end{abstract}
\maketitle

\section{\label{sec:Intro}Introduction}

Let $G$ be a connected reductive group defined over a non-archimedean
local field $k.$ Let $X_{\nr}(G(k))$ be the group of \textit{unramified
characters} of $G(k)$ (Definition \ref{def:Chars0}). For a smooth
representation $\pi$ of $G(k)$, the various representations $\pi\otimes\chi$,
$\chi\in X_{\nr}(G(k))$ are called the unramified twists of $\pi$.
A Theorem of Bernstein \citep[Theorem 1.4.2.1]{Roc09} states that
\begin{thm}
[Bernstein]\label{thm:intro}For each compact open subgroup $K$
of $G(k)$, the number of isomorphism classes, up to unramified twists,
of irreducible cuspidal representations of $G(k)$ having non-zero
$K$-fixed vectors is finite. 
\end{thm}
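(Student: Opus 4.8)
The plan is to deduce the theorem from two structural results of Bernstein about the category $\mathrm{Rep}(G(k))$ of smooth representations, after a routine reduction disposing of the ``up to unramified twists'' clause; the essential difficulty is to have the second of these available. The inputs are: \textbf{(I)} if the center of the ambient group is compact, then every supercuspidal smooth irreducible representation is a \emph{projective} object of its category of smooth representations; and \textbf{(II)} for a reductive $p$-adic group, or such a group modulo a cocompact lattice in its maximal split central torus, and any compact open subgroup $K$, the module $C_c^{\infty}(G/K)$ is Noetherian --- equivalently, $\mathrm{Rep}(G)$ is locally Noetherian, equivalently the Hecke algebra $\mathcal{H}(G,K) = e_K \ast \mathcal{H}(G) \ast e_K$ is Noetherian.

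\emph{Reduction to compact center.} Fix a compact open $K$; let $A_G$ be the maximal split central torus, $d = \dim A_G$, and choose, via a splitting $A_G \simeq \mathbb{G}_m^{\,d}$, the cocompact lattice $\Lambda \simeq \mathbb{Z}^{d}$ of $A_G(k)$ generated by the coordinate uniformizers. Set $G' := G(k)/\Lambda$, a locally profinite group with compact center, for which Inputs (I)--(II) hold as well. Then $K$ maps isomorphically onto a compact open $K' \subseteq G'$ (as $\Lambda$ is torsion free, $\Lambda \cap K = \{1\}$), and every unramified character of $G(k)$ is trivial on $K$. The restriction map $X_{\nr}(G(k)) \to \Hom(\Lambda,\mathbb{C}^{\times})$ is a surjection of $d$-dimensional complex tori, being dual to the inclusion of $\Lambda$ as a finite-index subgroup of $G(k)/G(k)^{1}$. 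Hence each supercuspidal $\pi$ of $G(k)$ with $\pi^{K} \neq 0$ has an unramified twist that is inflated from a supercuspidal of $G'$ with nonzero $K'$-fixed vectors, and choosing one such descent per unramified-twist class gives an injection into the set of isomorphism classes of supercuspidal representations of $G'$ with nonzero $K'$-fixed vectors (two isomorphic descents come from a single twist class, since they differ by an unramified twist). It therefore suffices to prove that this last set is finite.

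\emph{The counting.} Set $P := \operatorname{c-Ind}_{K'}^{G'} \mathbf{1} = C_c^{\infty}(G'/K')$; by Frobenius reciprocity an irreducible smooth representation $\rho$ of $G'$ satisfies $\rho^{K'} \neq 0$ if and only if $\rho$ is a quotient of $P$. Suppose, for contradiction, that there were pairwise non-isomorphic supercuspidals $\pi_1, \pi_2, \dots$ of $G'$, each with $\pi_i^{K'} \neq 0$. Then $P$ would admit direct-sum decompositions $P \cong \pi_1 \oplus \cdots \oplus \pi_n \oplus Q_n$ for every $n$: given such a decomposition, the surjection $P \twoheadrightarrow \pi_{n+1}$ vanishes on every summand $\pi_i$ with $i \le n$ (a nonzero homomorphism between non-isomorphic irreducibles would be an isomorphism), hence factors through a surjection $Q_n \twoheadrightarrow \pi_{n+1}$, which splits because $\pi_{n+1}$ is projective by Input (I); this gives $P \cong \pi_1 \oplus \cdots \oplus \pi_{n+1} \oplus Q_{n+1}$. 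The corresponding subrepresentations $\pi_1 \subsetneq \pi_1 \oplus \pi_2 \subsetneq \pi_1 \oplus \pi_2 \oplus \pi_3 \subsetneq \cdots$ of $P$ then form an infinite strictly increasing chain, contradicting Input (II). Hence only finitely many such $\pi_i$ can occur, which proves the theorem.

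\emph{Where the difficulty lies.} Once the (routine) reduction is granted, the argument is short and formal, and its entire content is Input (II) --- the local Noetherianity of $\mathrm{Rep}(G(k))$, a genuinely nontrivial theorem of Bernstein; Input (I) is comparatively soft. So in any fully self-contained treatment the hard part is exactly this finiteness property of the Hecke algebras $\mathcal{H}(G(k),K)$. A softer attempt via the Plancherel formula applied to $\mathbf{1}_K$ yields only the bound $\sum_{\pi} \deg \pi \le \mathrm{vol}(K)^{-1}$ over the relevant supercuspidals $\pi$, and would still require a positive lower bound for their formal degrees to conclude finiteness of their number --- an input of comparable depth --- so I would not follow that route.
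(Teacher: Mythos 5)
The paper offers no proof of this statement: it is quoted as background from \citep[Theorem 1.4.2.1]{Roc09}, so there is no argument of the author's to compare yours against, and I am judging your proposal on its own terms. Taken as a deduction from your Inputs (I) and (II), it is correct: the passage to $G'=G(k)/\Lambda$ is sound (the surjectivity of $X_{\nr}(G(k))\rightarrow\Hom(\Lambda,\mathbb{C}^{\times})$, the triviality of unramified characters on $K$, and the injectivity of the resulting map on twist classes all check out), and the inductive splitting-off of pairwise non-isomorphic projective irreducible quotients of $\operatorname{c-Ind}_{K'}^{G'}\mathbf{1}$, contradicting Noetherianity, is a standard and valid device.

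The genuine gap is that the derivation is circular as it stands. Input (II) --- the Noetherianity of $C_{c}^{\infty}(G/K)$, equivalently of the Hecke algebra $\mathcal{H}(G,K)$ --- is not a result available upstream of the theorem you are proving. In every treatment I know (Bernstein--Deligne, Bernstein's Harvard notes, Renard, Roche), Noetherianity is deduced \emph{from} the Bernstein decomposition of the category of smooth representations: one observes that only finitely many components contribute to $\mathcal{H}(G,K)$-modules and that each such component admits a finitely generated projective generator whose endomorphism ring is module-finite over a finitely generated commutative center. The finiteness of the contributing components is precisely the statement of the present theorem, applied to $G$ and to each of its Levi subgroups. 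So you have reduced Theorem \ref{thm:intro} to a theorem that is standardly proved using it, and you neither supply nor cite an independent proof of Noetherianity; I do not believe one exists off the shelf (even the recent integral finiteness results for Hecke algebras establish the cuspidal finiteness first, via Moy--Prasad depth theory). The content of Bernstein's actual proof, which your outline never engages, is a uniform finiteness input of a different nature --- in Bernstein's approach, a bound, depending only on $K$, on the supports of the matrix coefficients of $K$-fixed vectors of cuspidal representations of $G(k)^{1}$, which confines the pairwise independent spaces of such coefficients inside a single finite-dimensional space $C(K\backslash\Omega/K)$. Your closing paragraph correctly identifies Input (II) as carrying the whole weight of the argument, but does not register that the standard route to Input (II) passes through the very theorem being proved; until that circularity is broken, the proposal does not constitute a proof.
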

On the other hand, local Langlands conjectures predict that ``packets''
of irreducible admissible representations of $G(k)$ should be parametrized
by \textit{Langlands parameters}, which are \textit{admissible} elements
of $\Hh^{1}(W_{k}^{\prime},\hat{G})$ (Definition \ref{def:para}),
where $W_{k}^{\prime}$ is the Weil-Deligne group and $\hat{G}$ is
the complex dual of $G$ (see \citep{Borel1979}). Under this conjectural
correspondence, supercuspidal representations are expected to correspond
to \textit{discrete Langlands parameters} (Definition \ref{def:Fin2}).
By Langlands philosophy, one should expect a result analogous to Theorem
\ref{thm:intro} on the parameter side. 

Let $W_{k}$ be the Weil group of $k$ and $I_{k}$ its inertia subgroup.
Let $\Fr$ be a Frobenius element in $W_{k}$. Let $J$ be an open
subgroup of $I_{k}$ which is normal in $W_{k}$. Call two Langlands
parameters to be equivalent if they are in the same $\Hh^{1}(W_{k}/I_{k},(Z(\hat{G})^{I_{k}})^{\circ})$
orbit (see (\ref{eq:action})), where $Z(\hat{G})$ is the center
of $\hat{G}$. In Theorem \ref{thm:Fin}, we show that upto this equivalence,
there are only finitely many discrete Langlands parameters which are
trivial on $J$. In Section \ref{sec:CharPara}, using Kottwitz homomorphism,
we obeserve that $\Hh^{1}(W_{k}/I_{k},(Z(\hat{G})^{I_{k}})^{\circ})$
is isomorphic to the group $X_{\nr}(G(k))$ of unramified characters
of $G(k)$. 

These statements are thus consistent with the conjectures in \citep[Section 10.3 (2)]{Borel1979}.

\section{Notations}

Let $k$ be a non-archimedean local field and fix an algebraic closure
$\bar{k}$ of $k$. Let $W_{k}$ denote the Weil group of $k$ and
$I_{k}$ denote its inertia subgroup. We fix a Frobenius element $\Fr$
in $W_{k}$. For any algebraic group $\mathcal{G}$, we will denote
by $Z(\mathcal{G})$ the center of $\mathcal{G}$. For any subgroup
$\mathcal{H}$ of $\mathcal{G}$, we will denote by $Z_{\mathcal{G}}\mathcal{H}$,
the centralizer of $\mathcal{H}$ in $\mathcal{G}$. The identity
component of $\mathcal{G}$ will be denoted by $\mathcal{G}^{\circ}$.

\section{\label{sec:WeilRep}Representations of the Weil group}

Let $J$ be an open subgroup of $I_{k}$ which is normal in $W_{k}$. 
\begin{defn}
A representation of $W_{k}$ is called unramified, if it is trivial
on $I_{k}$. \end{defn}
\begin{lem}
\label{lem:WeilRep}Upto unramified twists, there exist only finitely
many irreducible representations of $W_{k}$ which are trivial on
$J$. \end{lem}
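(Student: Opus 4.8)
The plan is to reduce the whole question to the representation theory of the finite-by-cyclic group $\bar{G} := W_{k}/J$. An irreducible representation of $W_{k}$ that is trivial on $J$ is the same thing as an irreducible (automatically continuous, since $J$ is open) representation of $\bar{G}$, and since $J$ is open in $I_{k}$ and normal in $W_{k}$ we have a short exact sequence $1 \to \bar{I} \to \bar{G} \to \bar{G}/\bar{I} \to 1$ with $\bar{I} := I_{k}/J$ finite and $\bar{G}/\bar{I} \cong W_{k}/I_{k} \cong \mathbb{Z}$; because $\mathbb{Z}$ is free this sequence splits, so $\bar{G} \cong \bar{I} \rtimes \mathbb{Z}$, generated by the finite group $\bar{I}$ together with one element $w$ lifting $\Fr$. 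Under this identification the unramified twists are precisely the tensor products by characters of $\bar{G}/\bar{I} \cong \mathbb{Z}$, that is, by characters $\chi$ with $\chi|_{\bar{I}} = 1$, each such $\chi$ being determined by $\chi(w) \in \mathbb{C}^{\times}$. So the statement to prove is that $\bar{G}$ has only finitely many irreducible representations up to twisting by characters trivial on $\bar{I}$.

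First I would invoke Clifford theory with respect to the finite normal subgroup $\bar{I}$. If $\rho$ is irreducible then $\rho|_{\bar{I}}$ is a direct sum of $\bar{G}$-conjugates of a single irreducible representation $\sigma$ of $\bar{I}$; letting $\bar{G}_{\sigma}$ denote the stabilizer of the isomorphism class $[\sigma]$ (a subgroup of finite index containing $\bar{I}$, with $\bar{G}_{\sigma}/\bar{I}$ again infinite cyclic), one has $\rho \cong \operatorname{Ind}_{\bar{G}_{\sigma}}^{\bar{G}}\tau$ for some irreducible $\tau$ of $\bar{G}_{\sigma}$ lying above $\sigma$. The decisive point is that $\sigma$ \emph{extends} to a representation $\tilde{\sigma}$ of $\bar{G}_{\sigma}$: since $[\sigma]$ is $\bar{G}_{\sigma}$-invariant, the obstruction to extending lies in $\Hh^{2}(\bar{G}_{\sigma}/\bar{I},\mathbb{C}^{\times})$, which vanishes because $\bar{G}_{\sigma}/\bar{I} \cong \mathbb{Z}$ is free. (Concretely, $\bar{G}_{\sigma}$ is generated by $\bar{I}$ together with one element $w'$ of infinite order acting on $\bar{I}$ with no further relation, so it suffices to declare $\tilde{\sigma}(w')$ to be any linear isomorphism conjugating $\sigma$ to $\sigma \circ \operatorname{conj}_{w'}$, one existing by $w'$-invariance of $[\sigma]$.) Standard Clifford theory then says every irreducible of $\bar{G}_{\sigma}$ above $\sigma$ has the form $\tilde{\sigma} \otimes \chi_{0}$ for $\chi_{0}$ a character of $\bar{G}_{\sigma}/\bar{I}$; since $\mathbb{C}^{\times}$ is divisible, every such $\chi_{0}$ is the restriction of an unramified character $\chi$ of $\bar{G}$, and by the projection formula $(\operatorname{Ind}_{\bar{G}_{\sigma}}^{\bar{G}}\tilde{\sigma}) \otimes \chi \cong \operatorname{Ind}_{\bar{G}_{\sigma}}^{\bar{G}}(\tilde{\sigma} \otimes \chi|_{\bar{G}_{\sigma}})$. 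Hence, up to an unramified twist, $\rho \cong \operatorname{Ind}_{\bar{G}_{\sigma}}^{\bar{G}}\tilde{\sigma}$; since two choices of extension $\tilde{\sigma}$ differ by a character of $\bar{G}_{\sigma}/\bar{I}$, this induced representation is well-defined up to unramified twist and depends only on the $\bar{G}/\bar{I}$-orbit of $[\sigma]$ inside the set $\operatorname{Irr}(\bar{I})$ of irreducible representations of $\bar{I}$.

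Finally I would conclude: the assignment $[\sigma] \mapsto \operatorname{Ind}_{\bar{G}_{\sigma}}^{\bar{G}}\tilde{\sigma}$ descends to a surjection from the \emph{finite} set of $\bar{G}/\bar{I}$-orbits in $\operatorname{Irr}(I_{k}/J)$ onto the set of isomorphism classes of irreducible representations of $W_{k}$ trivial on $J$ taken up to unramified twist, so the latter set is finite. The main obstacle is exactly the extension step in the middle paragraph — that $\sigma$ extends from $\bar{I}$ to its stabilizer $\bar{G}_{\sigma}$ — and this is where the special structure of the Weil group enters, through $W_{k}/I_{k} \cong \mathbb{Z}$ being a cyclic, hence cohomologically trivial, quotient; without it one is left with a genuinely projective representation, unbounded Clifford multiplicities, and no dimension bound, so verifying the vanishing of the relevant Schur multiplier (or exhibiting the extension by hand as above) is the heart of the argument. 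The remaining ingredients — the splitting of $1 \to \bar{I} \to \bar{G} \to \mathbb{Z} \to 1$, the divisibility of $\mathbb{C}^{\times}$, and the formal behaviour of induction under twisting — are routine.
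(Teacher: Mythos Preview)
Your argument is correct but follows a different and more elaborate route than the paper's. The paper simply observes that some fixed power $\Fr^{d}$ (with $d$ depending only on $J$) acts trivially on the finite group $I_{k}/J$, so $\rho(\Fr^{d})$ commutes with $\rho(I_{k})$ and hence, by Schur's lemma, is a scalar; twisting by an unramified character whose value at $\Fr^{d}$ is that scalar makes $\rho$ factor through the single finite group $W_{k}/\langle \Fr^{d},J\rangle$, which has only finitely many irreducibles. Your Clifford-theoretic approach recovers finer structure --- an explicit surjection from $W_{k}/I_{k}$-orbits in $\operatorname{Irr}(I_{k}/J)$ onto the twist-classes --- but the ``main obstacle'' you isolate (extending $\sigma$ to its stabiliser via the vanishing of $\Hh^{2}(\mathbb{Z},\mathbb{C}^{\times})$) is an artefact of that route rather than intrinsic to the problem: the paper's Schur-lemma argument sidesteps it entirely. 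Both proofs ultimately rest on the same special feature of the Weil group, namely $W_{k}/I_{k}\cong\mathbb{Z}$, but the paper uses it only to choose a uniform $d$, not to kill a cohomological obstruction.
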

\begin{proof}
Let $(\rho,V)$ be an irreducible representation of $W_{k}$ such
that $J\subset$ $\ker\rho$. Let Fr $\in W_{k}$ be a Frobenius element
in $W_{k}$. It acts by conjugation on the finite group $I_{k}/J$,
so some power $\Fr{}^{d}$, $d\geq1$ acts trivially. Thus $\rho(\Fr^{d})$
commutes with $\rho(I_{k})$ and since $\rho$ is irreducible, $\rho(\Fr^{d})$
must be scalar by Schur's lemma. Let $\chi$ be an unramified character
of $W_{k}$ such that $\chi(\Fr^{d})=\rho(\Fr^{d})$. Thus $\rho$
is of the form $\chi\otimes\tau$ where $\tau$ is an irreducible
representation of the finite group $W_{k}/\langle\Fr^{d},J\rangle$.
Thus there are upto unramified twists, only finitely many irreducible
representations of $W_{k}$ which are trivial on $J$. 
\end{proof}

\section{\label{sec:para}Langlands parameters}

Let $G$ be a connected, reductive group over $k$ and let $k_{0}$
be the splitting field in $\bar{k}$ of the quasi-split inner form
of $G$. Let $\tensor[^{L}]G{}=\hat{G}\rtimes\Gal(k_{0}/k)$, where
$\hat{G}$ is the complex dual of $G$. The center $\tensor[^{L}]Z{}$
of $\tensor[^{L}]G{}$ is the group of $\Gal(k_{0}/k)$-fixed points
in the center of $\hat{G}$. 
\begin{defn}
\label{def:para}A homomorphism $\varphi:W_{k}\times\SL(2,\mathbb{C})\rightarrow\tensor[^{L}]G{}$
is called \textit{admissible} if 
\begin{enumerate}
\item $\varphi:\SL(2,\mathbb{C})\rightarrow\hat{G}$ is a homomorphism of
algebraic groups over $\mathbb{C}$. 
\item $\varphi$ is continuous on $I_{k}$ and $\varphi(\Fr)$ is semisimple. 
\item The composite $W_{k}\rightarrow^{L}G\rightarrow\Gal(k_{0}/k)$ is
the canonical surjection $W_{k}\rightarrow\Gal(k_{0}/k)$. 
\end{enumerate}

Two admissible homomorphisms are equivalent if they are conjugate
by $\hat{G}$. A \textit{Langlands parameter} is an equivalence class
of admissible homomorphisms. 

\end{defn}
The group $W_{k}^{\prime}:=W_{k}\times\SL(2,\mathbb{C})$ is called
the Weil-Deligne group of $k$. It is sometimes more convenient to
see a Langlands parameter as an element of $\Hh^{1}(W_{k}^{\prime},\hat{G})$.

\begin{defn}
\label{def:para2}A Langlands parameter is \textit{unramified} if
it is trivial on $I_{k}$ and $\SL(2,\mathbb{C})$. 
\end{defn}

\section{\label{sec:Finite}Finiteness result}

Let the notations be as in Section \ref{sec:para}. So $G$ is as
before a connected reductive group defined over $k$. Let $\hat{\mathfrak{g}}$
be the Lie algebra of the complex dual $\hat{G}$ of $G$. Let $W_{k}^{\prime}:=W_{k}\times\SL(2,\mathbb{C})$
denote the Weil-Deligne group of $k$. Let $J$ be an open subgroup
of $I_{k}$ which is normal in $W_{k}$. Let $\Phi(G)$ denote the
set of Langlands parameters of $G$. 

We have a well defined action 
\begin{equation}
\Hh^{1}(W_{k},Z(\hat{G}))\times\Phi(G)\rightarrow\Phi(G),\qquad[\alpha]\cdot[\phi]\mapsto[\alpha\cdot\phi]\label{eq:action}
\end{equation}

\begin{defn}
\label{def:Fin1}Call two parameters $\varphi$, $\varphi^{\prime}$
to be equivalent if they are in the same $\Hh^{1}(W_{k}/I_{k},(Z(\hat{G})^{I})^{\circ})$
orbit. \end{defn}
\begin{lem}
\label{lem:Fin2}Let $T$ be a tori defined over $k$ and let $\hat{T}$
be its comples dual. There are only finitely many equivalence classes
of Langlands parameters for $T$ which are trivial on $J$. \end{lem}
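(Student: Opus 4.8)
For a torus $T$, the $L$-group is $\tensor[^{L}]T{}=\hat{T}\rtimes\Gal(k_0/k)$ and Langlands parameters for $T$ are $\hat{T}$-conjugacy classes of admissible homomorphisms $\varphi:W_k'\to\tensor[^{L}]T{}$. Since $\hat{T}$ is abelian, conjugation by $\hat{T}$ is trivial, so there is no quotienting to worry about and a parameter is literally a splitting of $\tensor[^{L}]T{}$ over $W_k'$. The plan is to reduce the statement to the Weil-group case handled in Lemma \ref{lem:WeilRep}. First I would note that the restriction of any admissible $\varphi$ to $\SL(2,\mathbb{C})$ is a homomorphism of algebraic groups into the abelian group $\hat{T}$, hence trivial; so a parameter for $T$ is just an admissible homomorphism $W_k\to\tensor[^{L}]T{}$. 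Next I would observe that such a $\varphi$ is determined by its ``$\hat T$-component'' $c:W_k\to\hat T$, which is a $1$-cocycle for the $W_k$-action on $\hat T$ through $\Gal(k_0/k)$, and the equivalence relation of Definition \ref{def:Fin1} is exactly translation by the image of $\Hh^1(W_k/I_k,(\hat T^{I_k})^\circ)\to\Hh^1(W_k,\hat T)$.

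The key step is to show $\Hh^1(W_k,\hat T)$ is a finitely generated abelian group (in fact with bounded torsion once one fixes the constraint of triviality on $J$), and that the subgroup coming from unramified cocycles has finite index among those parameters that are trivial on $J$. Concretely: the action of $W_k$ on $\hat T$ factors through the finite quotient $\Gal(k_0/k)$, which itself factors through $W_k/J'$ for a suitable open normal $J'\subseteq I_k$; replacing $J$ by $J\cap J'$ (this only shrinks the set of admissible parameters we must bound, so it is harmless), every parameter trivial on $J$ factors through the extension $1\to I_k/J\to W_k/J\to W_k/I_k\to 1$ of $\widehat{\mathbb{Z}}$ by a finite group. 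I would then run the same argument as in Lemma \ref{lem:WeilRep}: $\Fr$ acts on the finite group $I_k/J$, some power $\Fr^d$ acts trivially, so the parameter restricted to $\langle \Fr^d\rangle$ commutes with everything; after an unramified twist — i.e. after translating $c$ by a cocycle inflated from $W_k/I_k$ valued in $(\hat T^{I_k})^\circ$ — one can arrange $c(\Fr^d)$ to lie in a fixed finite subset of $\hat T^{I_k}$ (using that $\hat T^{I_k}$ has finitely many components and each component's identity component is a divisible group surjected onto by $(\hat T^{I_k})^\circ$ via the norm/$d$-th power map). What remains is a cocycle on the finite group $W_k/\langle\Fr^d,J\rangle$ valued in $\hat T$, and $\Hh^1$ of a finite group with coefficients in a group of multiplicative type over $\mathbb{C}$ is finite; since there are finitely many choices for the finite subset and finitely many such $\Hh^1$ classes, we get finitely many equivalence classes.

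I expect the main obstacle to be the bookkeeping in the twisting step: one must check that ``adjusting $c(\Fr^d)$ into a fixed finite set'' can be achieved purely by an \emph{unramified} cocycle valued in the \emph{identity component} $(\hat T^{I_k})^\circ$, rather than in all of $\hat T^{I_k}$ or in $Z(\hat G)$. This is where the precise shape of the equivalence relation in Definition \ref{def:Fin1} matters, and it requires knowing that the map $(\hat T^{I_k})^\circ\to\hat T^{I_k}/(\hat T^{I_k})^\circ$ composed with raising to the $d$-th power hits enough: since $(\hat T^{I_k})^\circ$ is a complex torus, it is divisible, so $x\mapsto x^d$ is surjective on it, and the quotient by components is finite — this is exactly what lets the unramified twist absorb $c(\Fr^d)$ up to a finite ambiguity. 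A secondary point to verify carefully is that passing from $J$ to the smaller $J\cap J'$ genuinely loses no generality (it does not: fewer parameters are trivial on the smaller subgroup, so a finiteness bound there a fortiori is not what we want — rather, one should note every parameter trivial on $J$ is automatically trivial on no smaller group, so instead I enlarge the ambient finite quotient, which is fine). Once these points are pinned down, the statement follows by assembling the finitely many finite sets.
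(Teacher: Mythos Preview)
Your proposal is correct in outline and reaches the same endpoint as the paper---finiteness of $\Hh^1$ of a finite group with values in a complex torus---but by a genuinely different route. The paper argues directly via a cohomological decomposition
\[
\Hh^{1}(W_k/J,\hat{T}) \;=\; \Hh^{1}(\langle\Fr\rangle,(\hat{T}^{I_k})^{\circ}) \times \Hh^{1}(I_k/J,\hat{T}),
\]
identifies the first factor with the equivalence relation of Definition~\ref{def:Fin1}, and observes that the second factor is finite because $I_k/J$ is finite (so the cohomology is $|I_k/J|$-torsion) and $\hat{T}$ has only finitely many torsion points of each order. No explicit twisting is performed. Your approach instead transplants the $\Fr^d$-trick of Lemma~\ref{lem:WeilRep} to the abelian setting: twist by an unramified cocycle to push $c(\Fr^d)$ into a fixed finite set, then reduce to a cocycle on the finite group $W_k/\langle\Fr^d,J\rangle$. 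Both arguments ultimately rest on the same finiteness principle, but the paper's inflation--restriction style decomposition sidesteps the surjectivity-of-the-norm bookkeeping you correctly flag as the main obstacle in your version.

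One small correction to your reduction step: parameters trivial on $J$ are automatically trivial on the smaller group $J\cap J'$, so there are \emph{more}, not fewer, parameters trivial on $J\cap J'$; hence a finiteness bound for $J\cap J'$ \emph{does} imply the one for $J$, and your initial instinct to shrink $J$ was already valid. The self-doubt in your final paragraph has the inclusion reversed and can simply be deleted.
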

\begin{proof}
We have a canonical decomposition $\Hh^{1}((\langle\Fr\rangle\ltimes I_{k})/J,\hat{T})=\Hh^{1}(\langle\Fr\rangle,(\hat{T}^{I_{k}})^{\circ})\times\Hh(I_{k}/J,\hat{T})$.
Let $d_{J}=|I_{k}/J|$. Then any element of $\Hh^{1}(I_{k}/J,\hat{T})$
is killed by $d_{J}$. Thus the image of these elements lies in the
$d_{J}$-torsion points of $\hat{T}$ which is a finite set. Therefore
$\Hh^{1}(I_{k}/J,\hat{T})$ is finite. \end{proof}
\begin{lem}
\label{lem:Fin3}Let $\varphi:W_{k}^{\prime}\rightarrow\tensor[^{L}]G{}$
be an admissible homomorphism which is trivial on $J$. If $\image(\varphi)$
is not contained in any proper parabolic subgroup of $\tensor[^{L}]G{}$,
then there exists a number $n=n(J,G)$ such that $\varphi(\Fr^{n})\in Z(\tensor[^{L}]G{})$. \end{lem}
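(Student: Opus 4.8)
The plan is to exploit the hypothesis that $\image(\varphi)$ is not contained in a proper parabolic of $\tensor[^{L}]G{}$ in order to show that the connected centralizer $Z_{\hat G}(\image \varphi)^\circ$ is central, and then to run an argument in the spirit of the proof of Lemma \ref{lem:WeilRep}. First I would restrict $\varphi$ to $W_k' = W_k \times \SL(2,\mathbb{C})$ and consider the subgroup $H := \image(\varphi|_{I_k \times \SL(2,\mathbb{C})})$, a subgroup of $\hat G$. Since $\varphi$ is trivial on $J$ and continuous on $I_k$, the image $\varphi(I_k)$ is finite (it is a continuous image of the profinite group $I_k/J$, which has finite order $d_J := |I_k/J|$ once we also use that $J$ is open; more precisely $\varphi(I_k)$ is a quotient of $I_k/J$), while $\varphi(\SL(2,\mathbb{C}))$ is the image of a connected algebraic group. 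So $H$ has finitely many connected components and $H^\circ = \varphi(\SL(2,\mathbb{C}))$.

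Next I would analyze how $\Fr$ acts. Because $\Fr$ acts on the finite group $I_k/J$, some power $\Fr^{d}$ (with $d$ depending only on $J$) acts trivially by conjugation on $\varphi(I_k)$; after enlarging this power I may also assume $\Fr^{d}$ centralizes the (finite) component group of $H$ and that $\mathrm{Ad}(\varphi(\Fr^d))$ acts trivially on $\pi_0(H)$. Set $M := Z_{\tensor[^{L}]G{}}(H)$, a (possibly disconnected) algebraic subgroup of $\tensor[^{L}]G{}$ which is normalized by $\varphi(\Fr)$. The key structural point is this: if $\varphi(\Fr^d)$, or rather the semisimple part of a suitable power of it, were noncentral in $M^\circ$, then it would lie in a proper parabolic $P$ of $M^\circ$; pairing the unipotent radical data of $P$ with the reductive pieces coming from $H$ and arguing as in the standard "relevance/parabolic reduction" for $L$-parameters, one produces a proper parabolic of $\tensor[^{L}]G{}$ containing $\image(\varphi)$, contradicting the hypothesis. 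Hence, after passing to a fixed power $n$ depending only on $(J,G)$ — chosen to kill both the component group of $H$ and the component group of $M$, and large enough that the semisimple part of $\varphi(\Fr^n)$ already lies in the connected centralizer — we get $\varphi(\Fr^n) \in Z_{\tensor[^{L}]G{}}(H)^\circ$, and this connected reductive group has no proper parabolic issues forcing $\varphi(\Fr^n)$ to be central in $\tensor[^{L}]G{}$, i.e. $\varphi(\Fr^n) \in Z(\tensor[^{L}]G{})$.

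The main obstacle I anticipate is making the "not contained in a proper parabolic" hypothesis do its work cleanly: one must convert a statement about $\image(\varphi)$ lying in no proper parabolic of $\tensor[^{L}]G{}$ into a statement about $\varphi(\Fr^n)$ being central, and the bridge is the theory of parabolic subgroups of (disconnected) reductive groups together with the fact that a semisimple element lying in no proper parabolic of a connected reductive group is central (equivalently, its centralizer meets no proper Levi in the required way). The delicate bookkeeping is that $\tensor[^{L}]G{}$ is disconnected, so I would first reduce to $\hat G$ by using that $\image(\varphi)$ surjects onto $\Gal(k_0/k)$ and that parabolics of $\tensor[^{L}]G{}$ correspond to $\Gal(k_0/k)$-stable parabolics of $\hat G$; then the argument takes place inside $\hat G$, with $n$ forced to be a multiple of $[k_0:k]$ as well. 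The uniformity of $n$ in terms of $(J,G)$ comes from the fact that $\varphi(I_k)$ ranges over subgroups of the single finite group $I_k/J$, of which there are finitely many, and $\hat G$ has only finitely many conjugacy classes of connected reductive subgroups arising as centralizers; one takes $n$ to be a common multiple of the relevant component-group exponents across all these finitely many possibilities, together with $d$ and $[k_0:k]$.
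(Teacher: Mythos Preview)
Your overall strategy matches the paper's: pick $d$ so that $\Fr^{d}$ acts trivially on $I_{k}/J$, use discreteness to force a further power of $\varphi(\Fr)$ into the center, and then invoke a finiteness statement about centralizers to make $n$ uniform in $(J,G)$. The gap is in the middle step, which as written is either vacuous or incomplete.

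Your ``key structural point'' says that if $\varphi(\Fr^{d})$ is noncentral in $M^{\circ}$ then it lies in a proper parabolic $P$ of $M^{\circ}$. But every element of a nontrivial connected reductive group lies in a Borel, so this is no constraint. Presumably you mean that a noncentral semisimple $s\in M^{\circ}$ has its centralizer contained in a proper Levi of $M^{\circ}$; yet even granting that, your $M=Z_{\tensor[^{L}]G{}}(H)$ with $H=\varphi(I_{k}\times\SL(2,\mathbb{C}))$ need not contain $\varphi(\Fr)$, hence need not contain $\image(\varphi)$. So the promised passage ``pairing the unipotent radical data of $P$ with the reductive pieces coming from $H$'' does not visibly yield a parabolic of $\tensor[^{L}]G{}$ containing $\image(\varphi)$, and the closing clause ``this connected reductive group has no proper parabolic issues forcing $\varphi(\Fr^{n})$ to be central'' is an assertion rather than an argument.

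The paper makes the opposite move: instead of centralizing $H$, it centralizes the single semisimple element $s:=\varphi(\Fr^{d})^{l}\in\hat{G}$, where $l=|\Gal(k_{0}/k)|$, and sets $H:=Z_{\tensor[^{L}]G{}}(s)$. Because $s$ already commutes with $\varphi(I_{k})$, with $\varphi(\SL(2,\mathbb{C}))$, and with $\varphi(\Fr)$, one gets $\image(\varphi)\subset H$ and $s\in Z(H)$. Now $Z(H)^{\circ}$ is a torus, and the centralizer of a torus, $Z_{\tensor[^{L}]G{}}(Z(H)^{\circ})$, is a Levi subgroup of $\tensor[^{L}]G{}$; it contains $H\supset\image(\varphi)$, so discreteness forces it to equal $\tensor[^{L}]G{}$, whence $Z(H)^{\circ}\subset Z(\tensor[^{L}]G{})$. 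Finally, centralizers of semisimple elements fall into finitely many conjugacy classes, so one may choose $a=a(G)$ with $s^{a}\in Z(H)^{\circ}$ independently of $\varphi$, and $n=dla$ works. Your uniformity paragraph is essentially this last step; what your sketch is missing is the clean bridge ``centralizer of $Z(H)^{\circ}$ is a Levi containing $\image(\varphi)$'', which is exactly where the discreteness hypothesis bites.
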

\begin{proof}
Let $d$ be a positive integer such that $\Fr^{d}$ acts trivially
on $I_{k}/J$. Then $\varphi(\Fr^{d})\in Z(\image(\varphi))$. Let
$l=|\Gal(k_{0}/k)|$. Then $s:=\varphi(\Fr^{d})^{l}\in\hat{G}$. Let
$H=Z_{\tensor[^{L}]G{}}(s)$. Then $\image(\varphi)\subset H$ and
$s\in Z(H)$. The group $Z_{\tensor[^{L}]G{}}(Z(H)^{\circ})$ is a
Levi subgroup of $\tensor[^{L}]G{}$ containing $H$ and therefore
must be $\tensor[^{L}]G{}$ since $\image(\varphi)$ is not contained
in any proper parabolic subgroup. Thus $Z(H)^{\circ}\subset Z(\tensor[^{L}]G{})$.
From the structure theorem of the centralizers of semisimple elements,
we know that there can be only finitely many possibilities for $H$
\citep[Prop. 2.1]{Kurtzke}. Since $s\in Z(H),$ the fact that there
are only finitely many possibilities for $H$ allows us to choose
a positive integer $a=a(G)$ independent of $H$ such that $s^{a}\in Z(H)^{\circ}$.
The Lemma follows. \end{proof}
\begin{defn}
\label{def:Fin2}A Langlands parameter is called discrete if its image
is not contained in any parabolic subgroup of $\tensor[^{L}]G{}$. \end{defn}
\begin{thm}
\label{thm:Fin}Let $G$ be a connected reductive group over $k$.
Then there exist only finitely many equivalence classes of discrete
Langlands parameters for $G$ which are trivial on $J$. \end{thm}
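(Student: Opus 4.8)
The plan is to use Lemma~\ref{lem:Fin3} to force a Frobenius power into the centre, so that the parameter becomes, up to a central unramified cocycle, a homomorphism of a \emph{finite} group times $\SL(2,\mathbb C)$; one then counts using the finiteness of nilpotent orbits, the rigidity of finite-group representations, and an inflation--restriction bound for the leftover cocycle. Concretely, let $\varphi\colon W_k'\to\tensor[^L]G{}$ be a discrete parameter trivial on $J$ and let $n=n(J,G)$ be as in Lemma~\ref{lem:Fin3}; I would enlarge $n$ to a multiple of an integer $d$ with $\Fr^d$ acting trivially on $I_k/J$ (harmless, as it does not affect $\varphi(\Fr^n)\in Z(\tensor[^L]G{})$). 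Inspecting the proof of Lemma~\ref{lem:Fin3}, $\varphi(\Fr^n)$ lies in the connected group $Z(H)^{\circ}$, and since $Z(H)^{\circ}\subseteq Z(\tensor[^L]G{})=Z(\hat G)^{\Gal(k_0/k)}\subseteq Z(\hat G)^{I_k}$, being connected it sits in $Z^{\circ}:=(Z(\hat G)^{I_k})^{\circ}$; thus $\varphi(\Fr^n)\in Z^{\circ}$. Because the image of $I_k$ is normal in $\Gal(k_0/k)$, the torus $Z^{\circ}$ is $\Gal(k_0/k)$-stable, hence normal in $\tensor[^L]G{}$; I then pass to the quotient $q\colon\tensor[^L]G{}\to\tensor[^L]{G'}{}:=\tensor[^L]G{}/Z^{\circ}=(\hat G/Z^{\circ})\rtimes\Gal(k_0/k)$, with $\hat G/Z^{\circ}$ connected reductive. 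The point is that $Z^{\circ}$, $q$ and $\tensor[^L]{G'}{}$ do not depend on $\varphi$.

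The second step is to observe that $q\circ\varphi$ factors through $W_k'/\langle\Fr^n\rangle J=F\times\SL(2,\mathbb C)$ — here $F:=W_k/\langle\Fr^n\rangle J$ is finite of order at most $n\,|I_k/J|$, since $\langle\Fr^n\rangle J$ is normal in $W_k$ ($\Fr^n$ being central modulo $J$) — and then to count homomorphisms $F\times\SL(2,\mathbb C)\to\tensor[^L]{G'}{}$ that are algebraic on $\SL(2,\mathbb C)$. Such a homomorphism is a commuting pair: the restriction to $\SL(2,\mathbb C)$ lands in the connected group $\hat G/Z^{\circ}$ and, by Jacobson--Morozov, falls into one of finitely many conjugacy classes (finitely many nilpotent orbits); and given such a class with image $S$, the restriction to $F$ lands in the linear algebraic group $Z_{\tensor[^L]{G'}{}}(S)$, where homomorphisms of the finite group $F$ form only finitely many conjugacy classes — for a finite group the representation variety into a complex linear algebraic group has finitely many orbits, since $\Hh^{1}$ of a finite group with coefficients in a $\mathbb Q$-vector space vanishes, so every orbit is open. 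Hence $q\circ\varphi$ lies in one of finitely many $(\hat G/Z^{\circ})$-conjugacy classes; after replacing $\varphi$ by a $\hat G$-conjugate (which changes neither its parameter class nor its equivalence class, and is possible because $\hat G\to\hat G/Z^{\circ}$ is onto) I may assume $q\circ\varphi$ equals one of finitely many fixed homomorphisms $\bar\varphi_{0}$.

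It then remains to control, for each fixed $\bar\varphi_{0}$, the equivalence classes of discrete parameters $\varphi$ trivial on $J$ with $q\circ\varphi=\bar\varphi_{0}$. If $q\circ\varphi=q\circ\varphi'=\bar\varphi_{0}$, then $z(w):=\varphi'(w)\varphi(w)^{-1}\in Z^{\circ}$ for all $w$, and since $Z^{\circ}$ is central in $\hat G$ while $\Gal(k_0/k)$ acts on it through the canonical map $W_k\to\Gal(k_0/k)$, the requirement that $\varphi'$ be a homomorphism is precisely the cocycle identity: $z\in Z^{1}(W_k/J,Z^{\circ})$ (it is trivial on $\SL(2,\mathbb C)$, and $I_k$ acts trivially on $Z^{\circ}$). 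A coboundary in $Z^{\circ}$ corresponds to conjugating $\varphi$ by an element of $Z^{\circ}\subseteq\hat G$, and twisting $\varphi$ by a class in $\Hh^{1}(W_k/I_k,Z^{\circ})$ inflated to $W_k/J$ does not change $q\circ\varphi$; so these equivalence classes are controlled by $\Hh^{1}(W_k/J,Z^{\circ})\big/\operatorname{image}\bigl(\Hh^{1}(W_k/I_k,Z^{\circ})\bigr)$. By inflation--restriction for $1\to I_k/J\to W_k/J\to W_k/I_k\to1$ (with $I_k/J$ acting trivially on $Z^{\circ}$) this quotient embeds into $\Hh^{1}(I_k/J,Z^{\circ})=\Hom(I_k/J,Z^{\circ})$, which is finite since $I_k/J$ is finite and $Z^{\circ}$ is a torus over $\mathbb C$ — essentially the abelian computation in Lemma~\ref{lem:Fin2}. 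Together with the second step this gives the theorem.

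The step I expect to be the main obstacle is the second one: I must ensure that once Lemma~\ref{lem:Fin3} has put $\varphi(\Fr^{n})$ into the centre, what is left over is genuinely a homomorphism of one \emph{fixed} finite group $F$ times $\SL(2,\mathbb C)$ into one \emph{fixed} group $\tensor[^L]{G'}{}$, uniformly in $\varphi$ — only then do finiteness of nilpotent orbits and rigidity of finite-group representations produce a uniform bound. This is exactly why Lemma~\ref{lem:Fin3} provides an $n=n(J,G)$ depending only on $J$ and $G$, and why the quotient $Z^{\circ}$ (hence $F$ and $\tensor[^L]{G'}{}$) is chosen independently of $\varphi$.
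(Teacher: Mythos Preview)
Your proof is correct and follows the same architecture as the paper's: use Lemma~\ref{lem:Fin3} to make $\varphi(\Fr^{n})$ central, pass to a central quotient of $\tensor[^{L}]G{}$ so that the residual map factors through a fixed finite group times $\SL(2,\mathbb{C})$, count conjugacy classes of such maps, and bound the fiber via cohomology with central coefficients. The only differences are implementational: the paper quotients by all of $Z(\hat G)$ (landing in $\hat G_{\ad}\rtimes\Gal(k_{0}/k)$), cites Slodowy for the finite-group count, and invokes Lemma~\ref{lem:Fin2} for the fiber, whereas you quotient by the smaller torus $(Z(\hat G)^{I_{k}})^{\circ}$ and supply both steps directly via nilpotent orbits, Weil rigidity, and inflation--restriction.
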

\begin{proof}
Let $\varphi:W_{k}^{\prime}\rightarrow\tensor[^{L}]G{}$ be a an admissible
homomorphism. By Lemma \ref{lem:Fin3}, there exists an integer $n=n(J,G)$
such that the composite map $\bar{\varphi}:W_{k}^{\prime}\rightarrow\tensor[^{L}]G{}\rightarrow\mathcal{G}:=\hat{G}_{\ad}\rtimes\Gal(k_{0}/k)$
factors through $W_{k}/\langle\Fr^{n},J\rangle\times\SL(2,\mathbb{C})$.
By \citep[II.3, Theorem 1]{Slodowy}, there are only finitely many
$\mathcal{G}$ conjugacy classes of homomorphisms $W_{k}/\langle\Fr^{n},J\rangle\rightarrow\mathcal{G}$.
It follows that there are only finitely many $\mathcal{G}$ conjugacy
classes of homomorphisms $W_{k}^{\prime}\rightarrow\mathcal{G}$ which
are trivial on $J$. 

Now if $\varphi_{1},\varphi_{2}\in\Hh^{1}(W_{k}^{\prime},\hat{G})$
are two Langlands parameters such that their images in $\Hh^{1}(W_{k}^{\prime},\hat{G}_{\ad})$
are equal, then $\varphi_{1}=\varphi_{c}\cdot\varphi_{2}$ where $\varphi_{c}\in\Hh^{1}(W_{k},Z(\hat{G}))$.
By Lemma \ref{lem:Fin2}, there are only finitely many such $\varphi_{c}$
upto equivalence. The theorem follows.
\end{proof}
The Weil group $W_{k}$ carries an upper numbering filteration $\{W_{k}^{r}\}_{r\geq0}$.
The depth of a parameter $\varphi:W_{k}^{\prime}\rightarrow\tensor[^{L}]G{}$
is defined to be 
\[
\inf\{r\geq0:W_{k}^{s}\subset\ker(\varphi)\mbox{ for }s>r\}.
\]

\begin{cor}
There exist only finitely many equivalence classes of Langlands parameters
of a given depth. 
\end{cor}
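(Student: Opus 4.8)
The strategy is to reduce the assertion to Theorem~\ref{thm:Fin}. It suffices to produce, for each fixed $r\geq 0$, a single open subgroup $J$ of $I_k$, normal in $W_k$, such that every Langlands parameter $\varphi$ of depth $\leq r$ is trivial on $J$: applying Theorem~\ref{thm:Fin} to this $J$ then finishes the proof, since the parameters of depth $\leq r$ form a subset of the parameters trivial on $J$. The obvious candidate $J=\overline{\bigcup_{s>r}W_k^{s}}$ is unsuitable, because it is \emph{not} open in $I_k$: its image in the tame quotient $I_k/P_k$ ($P_k$ the wild inertia) is trivial, whereas $I_k/P_k\cong\prod_{\ell\neq p}\mathbb{Z}_{\ell}$ is infinite. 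So the real content is a bound on $|\varphi(I_k)|$ which is \emph{uniform} over all parameters $\varphi$ of depth $\leq r$.

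To establish such a bound, observe first that $\varphi|_{I_k}$ is a continuous homomorphism from the profinite group $I_k$ to the complex Lie group $\hat{G}(\mathbb{C})$, so $\varphi(I_k)$ is finite. Fix a faithful algebraic representation $\hat{G}\hookrightarrow\GL_{N}(\mathbb{C})$. By Jordan's theorem, $\varphi(I_k)$ contains an abelian normal subgroup $A$ of index at most a constant $C(N)$; after replacing $A$ by the intersection of its finitely many $\Fr$-conjugates we may assume that its preimage $I'\subset I_k$ is normal in $W_k$, and $I'=I_{k_1}$ for a finite extension $k_1/k$ of degree bounded in terms of $N$ (so there are only finitely many possibilities for $k_1$). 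Now $\varphi|_{I'}$ is a direct sum of characters of $I_{k_1}$, and each such character has depth $\leq r_1$ relative to $k_1$ for some $r_1=r_1(r,N,k)$, since the upper numbering filtrations of $k_1$ and $k$ are comparable; hence it factors through the finite group $\mathcal{O}_{k_1}^{\times}/U_{k_1}^{(n)}$ (for $n=n(r_1)$), under the identification of the inertia part of $W_{k_1}^{\mathrm{ab}}$ with $\mathcal{O}_{k_1}^{\times}$, and so has order bounded in terms of $r,N,k$. Therefore $A=\varphi(I')$, being a subgroup of $(\mathbb{C}^{\times})^{N}$ of bounded exponent, has bounded order, and hence $|\varphi(I_k)|\leq B_0$ for a constant $B_0=B_0(r,G,k)$.

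With this bound in hand, note that $\ker(\varphi|_{I_k})$ is an open subgroup of $I_k$ of index $\leq B_0$ which is normal in $W_k$ (normality being automatic since $I_k\trianglelefteq W_k$) and which contains $W_k^{s}$ for every $s>r$. The quotient $I_k/\overline{\bigcup_{s>r}W_k^{s}}$ is topologically finitely generated (its tame part $I_k/P_k$ is procyclic and, by the structure of the ramification filtration, $P_k/\overline{\bigcup_{s>r}W_k^{s}}$ is finite), so it has only finitely many open subgroups of index $\leq B_0$; letting $J$ be the intersection of the corresponding (finitely many) subgroups of $I_k$, we obtain a single open $J\trianglelefteq W_k$ with $J\subset I_k$ and $J\subset\ker\varphi$ for every parameter $\varphi$ of depth $\leq r$. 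Theorem~\ref{thm:Fin} applied to $J$ now gives the Corollary. The main obstacle is exactly the uniform bound $|\varphi(I_k)|\leq B_0$ — that is, compensating for the failure of $\overline{\bigcup_{s>r}W_k^{s}}$ to be open in $I_k$; the rigidity coming from Jordan's theorem, together with the finiteness of $\mathcal{O}_{k_1}^{\times}/U_{k_1}^{(n)}$ for local fields, is what supplies it. The remaining steps are routine.
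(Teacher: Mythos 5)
You are right that the Corollary is not a formal consequence of Theorem~\ref{thm:Fin} (the paper states it without proof): the closed subgroup $\overline{\bigcup_{s>r}W_k^{s}}$ sits inside the wild inertia $P_k$ and is therefore not open in $I_k$, so the substance of the Corollary is a bound on $|\varphi(I_k)|$ that is uniform over all parameters of depth $\leq r$. Identifying this, and attacking it via Jordan's theorem plus class field theory in rank one, is the right strategy. (Note also that ``discrete'' must be carried over from the Theorem: for $\GL_2$ the parameters $1\oplus\nu$ with $\nu$ unramified are pairwise inequivalent and all of depth $0$.) Two of your intermediate steps, however, are not correct as written. A character $\lambda$ of $I_{k_1}$ of bounded depth does \emph{not} in general factor through the image of $I_{k_1}$ in $W_{k_1}^{\mathrm{ab}}\cong k_1^{\times}$: a depth-zero character of $I_k/P_k\cong\prod_{\ell\neq p}\mathbb{Z}_{\ell}$ can have arbitrary prime-to-$p$ order, while $\mathcal{O}_{k}^{\times}/U_{k}^{(1)}$ has order $q-1$. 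What rescues the step is that your $\lambda$'s are permuted by $\Fr$ (as $I'$ is normal in $W_k$), hence each is fixed by $\Fr^{N!}$, and this Frobenius-stability is exactly what forces $\lambda$ to factor through $\mathcal{O}_{k_1}^{\times}/U_{k_1}^{(n)}$; you must invoke it. Likewise ``only finitely many possibilities for $k_1$'' fails in positive residue characteristic for fields of characteristic $p$ (Artin--Schreier), and the uniform comparison of depths should instead be extracted from the bound $\psi_{k_1/k}(u)\leq e(k_1/k)\,u$ on the Herbrand function, which is uniform because $[k_1 k^{\mathrm{nr}}:k^{\mathrm{nr}}]$ is bounded by a constant depending only on $N$.

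The serious gap is in your final step: $P_k/\overline{\bigcup_{s>r}W_k^{s}}$ is \emph{not} finite for $r\geq1$, and $I_k/\overline{\bigcup_{s>r}W_k^{s}}$ is not topologically finitely generated. Indeed, for every unramified extension $k'/k$ one has $W_{k'}^{s}=W_k^{s}$ for $s>0$, and the characters of $U_{k'}^{(1)}/U_{k'}^{(2)}\cong\mathbb{F}_{q'}^{+}$ (with $q'=q^{[k':k]}$) yield, via class field theory, $q'$ distinct characters of $P_k$ trivial on $W_k^{1+}$; letting $[k':k]\to\infty$ shows that $P_k/\overline{\bigcup_{s>1}W_k^{s}}$ surjects onto $(\mathbb{Z}/p)^{n}$ for every $n$. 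So there may be infinitely many open subgroups of index $\leq B_0$, and your $J$, defined as their intersection, need not be open. The repair uses more of the structure you already have: $\ker(\varphi|_{I_k})$ is not merely open of index $\leq B_0$, it is normal in $W_k$ and contains $W_k^{s}$ for $s>r$; adjoining a bounded power of $\Fr$, it corresponds to a finite Galois extension of $k$ of bounded degree \emph{and bounded discriminant} (bounded upper-numbering ramification together with bounded degree bounds the different). By Krasner--Serre finiteness of extensions of a local field with bounded degree and discriminant, there are only finitely many candidates for $\ker(\varphi|_{I_k})$, and their intersection is the desired $J$. With these two repairs your argument closes the gap the paper leaves open; as written it does not.
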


\section{\label{sec:chars}Unramified characters}

Let $X_{k}(G)=\Hom(G,\mathbb{G}_{m})$, the lattice of $k$-rational
characters of $G$. Let 
\[
G(k)^{1}=\{g\in G(k):\val_{k}(\chi(g))=0,\forall\chi\in X_{k}(G)\}.
\]
 Then $G(k)^{1}$ is an open normal subgroup of $G(k)$ that contains
each compact subgroup of $G(k)$. It also has the following properties:
\begin{enumerate}
\item $G(k)^{1}$ has compact center;
\item $G(k)/G(k)^{1}$ is a free abelian group of finite rank;
\item The center $Z(G(k)^{1})$ of $G(k)^{1}$ has finite rank in $G(k)$. \end{enumerate}
\begin{defn}
\label{def:Chars0}The group $X_{\nr}(G(k))$ of \textit{unramified
characters} of $G(k)$ is defined by 
\[
X_{\nr}(G(k))=\Hom(G(k)/G(k)^{1},\mathbb{C}^{\times}).
\]

\end{defn}

\begin{defn}
\label{def:Chars}For a smooth representation $\pi$ of $G(k)$, the
representations $\pi\otimes\chi,$ $\chi\in X_{\nr}(G(k))$ are called
the \textit{unramified twists} of $\pi$. \end{defn}
\begin{thm}
[Bernstein]For each compact open subgroup $K$ of $G(k)$, the number
of isomorphism classes, up to unramified twists, of irreducible cuspidal
representations $\tau$ of $G(k)$ with $\tau^{K}\neq0$ is finite. 
\end{thm}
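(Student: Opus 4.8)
The plan is to reduce, by an unramified twist, to supercuspidal representations with a fixed central character, to pass to a quotient of $G(k)$ with compact centre, and then to realise the space of fixed vectors as a finitely generated projective module over a Noetherian Hecke algebra. For the reduction: since $K\subset G(k)^{1}$ and every unramified character is trivial on $G(k)^{1}$, we have $(\tau\otimes\chi)^{K}=\tau^{K}$ for all $\chi\in X_{\nr}(G(k))$, and $\tau\otimes\chi$ is supercuspidal when $\tau$ is, so unramified twisting preserves the set in question. Write $Z=Z(G(k))$, let $Z^{1}$ be its maximal compact subgroup, and fix a full-rank lattice $\Lambda$ in the maximal $k$-split central torus of $G$ with $\Lambda\cap Z^{1}=1$, so that $\bar{G}:=G(k)/\Lambda$ has compact centre. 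If $\tau$ is supercuspidal with $\tau^{K}\neq0$ and central character $\omega$, then for $0\neq v\in\tau^{K}$ and $z\in Z\cap K$ we have $\omega(z)v=\tau(z)v=v$, so $\omega$ is trivial on the finite-index subgroup $Z\cap K$ of $Z^{1}$ and $\omega|_{Z^{1}}$ lies in a finite set; moreover $\Lambda$ has finite index in $G(k)/G(k)^{1}$ and $\mathbb{C}^{\times}$ is divisible, so the restriction $X_{\nr}(G(k))\rightarrow\Hom(\Lambda,\mathbb{C}^{\times})$ is surjective, and after an unramified twist we may assume $\omega|_{\Lambda}=1$, so that $\tau$ descends to $\bar{G}$ with central character in a finite set $S$. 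Since inflation along $G(k)\rightarrow\bar{G}$ is injective on isomorphism classes, it suffices to prove: for each $s\in S$, only finitely many supercuspidal representations of $\bar{G}$ with central character $s$ have a nonzero $\bar{K}$-fixed vector, where $\bar{K}$ denotes the image of $K$.

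Fix such an $s$, and let $\mathcal{A}$ be the Hecke algebra of $(\bar{G},\bar{K})$ with central character $s$, so that the smooth representations of $\bar{G}$ with central character $s$ generated by their $\bar{K}$-fixed vectors are precisely the $\mathcal{A}$-modules, and $V\mapsto V^{\bar{K}}$ induces a bijection between the irreducible such representations and the simple $\mathcal{A}$-modules. Taking $\bar{K}$-invariants is exact, an irreducible smooth representation is generated by its $\bar{K}$-fixed vectors, and --- the single structural input about supercuspidals that is needed --- a supercuspidal representation is a projective object in the category of smooth representations with a fixed central character. It follows that for $\tau$ as above, $\tau^{\bar{K}}$ is a finitely generated projective, and simple, $\mathcal{A}$-module, and distinct $\tau$ give non-isomorphic ones. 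Now $\mathcal{A}$ is Noetherian, and a Noetherian ring has only finitely many isomorphism classes of simple projective modules: if $\mathcal{A}e_{i}$, $i\in I$, were infinitely many pairwise non-isomorphic ones, then $\Hom_{\mathcal{A}}(\mathcal{A}e_{i},\mathcal{A}e_{j})=e_{j}\mathcal{A}e_{i}=0$ for $i\neq j$ would exhibit $\bigoplus_{i\in I}\mathcal{A}e_{i}$ as a submodule of $\mathcal{A}$, contradicting the ascending chain condition. Summing over the finitely many $s\in S$ completes the proof.

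Everything above is formal once one grants that $\mathcal{A}$ --- equivalently the full Hecke algebra $\mathcal{H}(\bar{G},\bar{K})$ --- is Noetherian, and this is the substantial point: it is essentially the content of Bernstein's theory of the Bernstein centre, which rests on the finiteness properties of the Jacquet and parabolic induction functors, so a self-contained treatment would have to reconstruct that theory. The same difficulty reappears, merely relocated, in the two natural alternatives for the final step. One is analytic: the Harish-Chandra Plancherel formula, evaluated at the normalised characteristic function of $\bar{K}$, bounds $\sum_{\tau}d_{\tau}\dim\tau^{\bar{K}}$ (sum over discrete series, $d_{\tau}$ the formal degree) by $\operatorname{vol}(\bar{K})^{-1}$, so the matter comes down to a positive lower bound for the formal degrees of representations possessing a $\bar{K}$-fixed vector. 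The other rests on Schur orthogonality: the matrix coefficients built from $\bar{K}$-fixed vectors of inequivalent supercuspidals of $\bar{G}$ with central character $s$ are linearly independent in $C_{c}^{\infty}(\bar{K}\backslash\bar{G}/\bar{K})$ --- for the pairing $(f_{1},f_{2})\mapsto\int_{\bar{G}}f_{1}(g)f_{2}(g^{-1})\,dg$ their Gram matrix is block diagonal with invertible blocks --- whence the problem reduces to the finite-dimensionality of the span of such coefficients, a statement of essentially the same depth as the theorem.
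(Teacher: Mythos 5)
First, a point of reference: the paper does not prove this theorem at all --- it is quoted from Bernstein via \citep[Theorem 1.4.2.1]{Roc09} --- so there is no in-paper argument to compare yours against, and I assess your proposal on its own terms. Your reductions are sound: since $K\subset G(k)^{1}$ unramified twisting does not change $\tau^{K}$; the central character lands in a finite set after a twist trivializing it on $\Lambda$; a cuspidal $\tau$ with central character $s$ is projective in the category of smooth representations with central character $s$, hence (being a quotient of $\mathcal{H}_{s}e_{\bar{K}}$) a direct summand of it, so $\tau^{\bar{K}}$ is a direct summand of $\mathcal{A}=e_{\bar{K}}\mathcal{H}_{s}e_{\bar{K}}$; and your orthogonal-idempotent argument correctly shows that a left Noetherian ring has only finitely many isomorphism classes of simple projective left modules. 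The genuine gap is the one you flag yourself and then set aside: the Noetherianity of $\mathcal{A}$ is never established, and this is not a routine omission. In the standard development (Bernstein--Deligne, Bernstein's notes, Roche) the Noetherianity of $\mathcal{H}(G,K)$ is obtained from the Bernstein decomposition, whose construction requires knowing that only finitely many inertial classes of cuspidal data meet the subcategory of representations generated by their $K$-fixed vectors --- that is, it lies logically \emph{downstream} of the very theorem you are proving, applied to $G$ and all its Levi subgroups. As written, your argument therefore reduces the theorem to a statement that is at least as deep and is usually derived as a consequence of it; it is a reduction, not a proof.

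The fix is the route you sketch in your closing sentences, and the missing lemma there is not ``of essentially the same depth as the theorem'': it is the \emph{uniform bound on supports of cuspidal matrix coefficients}. Concretely, reduce to $G^{1}=G(k)^{1}$, which has compact centre: two cuspidals are unramified twists of one another if and only if their restrictions to $G^{1}$ share an irreducible constituent, and these constituents are compact representations (compactly supported matrix coefficients). Jacquet's lemma in Casselman's form --- for a compact open $K$ admitting an Iwahori factorization with respect to $P=MN$ and for $a$ sufficiently dominant depending only on $K$, the operator $e_{K}\pi(a)e_{K}$ factors through the Jacquet module $\pi_{N}$, which vanishes when $\pi$ is cuspidal --- yields a single compact set $C_{K}\subset G^{1}$, independent of the representation, containing the support of every $K$-biinvariant matrix coefficient of every irreducible cuspidal representation of $G^{1}$. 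All such coefficients therefore lie in the finite-dimensional space of functions supported on the finitely many double cosets in $K\backslash C_{K}/K$, and the Schur-orthogonality linear independence you already noted gives the finiteness outright, with no appeal to Noetherian Hecke algebras. This is essentially the argument in the source the paper cites.
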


\section{\label{sec:CharPara}Langlands parameters for unramified characters}

In \citep[Section 7]{Kott97}, Kottwitz defined a surjective homomorphism
\[
\kappa_{G}:G(k)\rightarrow X^{*}(Z(\hat{G}))_{I_{k}}^{\Fr}.
\]
 Let $v_{G}:G(k)\rightarrow X^{*}(Z(\hat{G}))_{I_{k}}^{\Fr}/\tors$
be the homomorphism induced by the Kottwitz homomorphism. Then $\kernel v_{G}=G(k)^{1}$
(see \citep[Remark 10]{Ha08}). We therefore have:
\begin{eqnarray}
X_{\nr}(G(k)) & \cong & \Hom(X^{*}(Z(\hat{G}))_{I_{k}}^{\Fr}/\tors,\mathbb{C}^{\times})\nonumber \\
 & \cong & \Hom(X^{*}((Z(\hat{G})^{I_{k}})^{\circ}{}_{\Fr}),\mathbb{C}^{\times})\nonumber \\
 & \cong & (Z(\hat{G})^{I_{k}})^{\circ}{}_{\Fr}.\label{eq:CharPara2}
\end{eqnarray}
 The last equality holds by Cartier duality. 

We have 
\begin{equation}
(Z(\hat{G})^{I_{k}})_{\Fr}^{\circ}\cong\Hh^{1}(W_{k}/I_{k},(Z(\hat{G})^{I_{k}})^{\circ})\hookrightarrow\Hh^{1}(W_{k},Z(\hat{G})).\label{eq:CharPara3}
\end{equation}

Combining equations (\ref{eq:CharPara2}) and (\ref{eq:CharPara3}),
we get a map
\begin{equation}
X_{\nr}(G(k))\hookrightarrow\Hh^{1}(W_{k},Z(\hat{G}))\rightarrow\Hh^{1}(W_{k}^{\prime},\hat{G}).
\end{equation}

One can thus associate to the unramified characters, Langlands parameters
whose image lie in the center of $\hat{G}$.

\section*{Acknoledgement}

The author is very thankful to Sandeep Varma for pointing out several
mistakes in the earlier drafts of this article. He is also very thankful
to Dipendra Prasad for pointing out a gap in the proof of Theorem
\ref{thm:Fin} and for his careful proof reading. He is grateful to
his host Rainer Weissauer for many helpful suggestions and to the
Mathematics Center Heidelberg (MATCH) where this work was written.
He would also like to thank Sudhanshu Shekhar for many helpful conversations.

\section*{Bibliography}

\bibliographystyle{alpha}
\bibliography{fin}

\end{document}